\documentclass[11pt]{amsart}
\usepackage{cite,hyperref}
\usepackage[text={6.1in,8.5in},centering]{geometry}
\usepackage{amssymb,amsmath,amsthm,mathtools,extpfeil}
\usepackage[all,cmtip]{xy}
\usepackage{tikz}
\usepackage{rotating,enumerate,subcaption}

\usetikzlibrary{arrows}

\newtheorem{thm}[equation]{Theorem}
\newtheorem*{thm*}{Theorem}
\newtheorem{thmA}{Theorem}

\newtheorem{lem}[equation]{Lemma}

\newtheorem*{prop*}{Proposition}

\newtheorem{conj}[equation]{Conjecture}
\theoremstyle{definition}

\newtheorem{ex}[equation]{Example}

\numberwithin{equation}{section}

\DeclareMathOperator{\id}{id}

\DeclareMathOperator{\vol}{vol}

\DeclareMathOperator{\intr}{int}

\DeclareMathOperator{\rk}{rk}

\DeclareMathOperator{\Lip}{Lip}

\newcommand{\epsi}{\varepsilon}

\begin{document}
\title{A zoo of growth functions of mapping class sets}
\author{Fedor Manin}
\begin{abstract}
  Suppose $X$ and $Y$ are finite complexes, with $Y$ simply connected.  Gromov
  conjectured that the number of mapping classes in $[X,Y]$ which can be realized
  by $L$-Lipschitz maps grows asymptotically as $L^\alpha$, where $\alpha$ is an
  integer determined by the rational homotopy type of $Y$ and the rational
  cohomology of $X$.  This conjecture was disproved in a recent paper of the
  author and Weinberger; we gave an example where the ``predicted'' growth is
  $L^8$ but the true growth is $L^8\log L$.  Here we show, via a different
  mechanism, that the universe of possible such growth functions is quite large.
  In particular, for every rational number $r \geq 4$, there is a pair $X,Y$ for
  which the growth of $[X,Y]$ is essentially $L^r$.
\end{abstract}
\maketitle

\section{Introduction}

Let $X$ and $Y$ be compact piecewise Riemannian spaces, for example Riemannian
manifolds with boundary or finite simplicial complexes with a simplexwise linear
metric.  The \emph{growth} of the mapping class set $[X,Y]$ is the function
$g_{[X,Y]}:\mathbb{R}^+ \to \mathbb{N}$ given by
$$g_{[X,Y]}(L)=\#\{\alpha \in [X,Y]
\text{ which have a representative with Lipschitz constant}\leq L\}.$$
Asymptotically, this is a topological and even homotopy-theoretic notion; this is
because any compact, homotopy equivalent $X$ and $X'$ of this form are in fact
Lipschitz homotopy equivalent.

One well-studied class of examples is the growth of a finitely presented group
$\Gamma$, which can be thought of as the growth of the set of (based) homotopy
classes of maps $S^1 \to Y$, for any complex $Y$ with fundamental group $\Gamma$.
In that setting, Gromov's famous polynomial growth theorem \cite{GrPG} states
that growth is either asymptotic to a polynomial (for virtually nilpotent groups)
or superpolynomial (for all other groups).  In this paper, we instead concern
ourselves with simply connected $Y$ and general $X$, where the picture turns out
to be very different.

This notion of growth was first introduced by Gromov in \cite{GrHED}, where he
noted several facts about it.  First, he showed that the growth of $[S^n,S^n]$ is
$\Theta(L^n)$.  The upper bound is essentially homological: the domain of an
$L$-Lipschitz map can fit at most $O(L^n)$ preimages of a regular point, counted
with sign.  (We later give the simplest rigorous formulation of this argument,
which uses differential forms.)  Similar upper bounds can be found for the
pullback of any cohomology class.

Next, Gromov showed that the growth of $[S^{4n-1},S^{2n}]$ is $\Theta(L^{4n})$,
rather than the $\Theta(L^{4n-1})$ one might expect by generalizing from the
previous estimate.  The upper bound here generalizes to show that $g_{[S^n,Y]}$ is
bounded by a polynomial when $Y$ is simply connected.  (We are nowhere close to
showing whether this bound is sharp.)

In \cite[Ch.~7, p.~358]{GrMS}\footnote{The same range of ideas is discussed in
  Gromov's conference paper \cite{GrQHT}.}, Gromov gave some further conjectures
about $g_{[X,Y]}$ in the case where $Y$ is simply-connected; notably, he
conjectured that it is always asymptotic to $d^\alpha$, where $\alpha$ is an
integer determined by calculations in rational homotopy theory.

The heuristic that leads to this conjecture is as follows.  By obstruction
theory, a map $f:X \to Y$ is determined, non-uniquely and up to some finite
indeterminacy, by a finite number of ($\mathbb{Q}$-valued) rational homotopy
invariants.  Gromov sketches an argument that these invariants are always bounded
by a polynomial in the Lipschitz constant, and therefore (informally speaking)
all homotopy classes of $L$-Lipschitz maps live in a region of polynomial volume
in the set of rational homotopy classes.  If one makes a pair of very strong
assumptions, informally stated as follows:
\begin{enumerate}[(S1)]
\item integral homotopy classes are sprinkled evenly through this region
\item and all integral classes in the region are realizable via $L$-Lipschitz
  maps,
\end{enumerate}
then the conjecture holds.

In \cite{IRMC}, the author and Weinberger showed that $g_{[X,Y]}$ is bounded by a
polynomial whenever $Y$ is simply connected.  However, we disproved the stronger
conjecture by showing that
$$g_{[(S^3 \times S^4)^{\#2},S^4]}(L)=\Theta(L^8\log L).$$
This construction relied on violating assumption (S1): for this example, the
``density'' of integral homotopy classes in the rational ones grows without bound
as rational invariants increase.

In this paper we show that assumption (S2)---that all homotopy classes
whose rational homotopy invariants satisfy the obvious bounds are realizable with
Lipschitz constant $O(L)$---is also false in general.  This lets us produce a
much larger variety of growth functions of mapping class sets.  In fact, we prove
the following:
\begin{thmA}
  For every rational number $r>4$, there is a pair of simply connected spaces
  $X$ and $Y$ such that $\lim_{L \to \infty} \log_L(g_{[X,Y]}(L))=r$.
\end{thmA}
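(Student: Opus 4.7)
The plan is to construct, for each rational $r = p/q > 4$, simply connected spaces $X_r$ and $Y_r$ and prove matching upper and lower bounds forcing $\lim_{L \to \infty} \log_L g_{[X_r,Y_r]}(L) = r$. The underlying mechanism is a controlled failure of assumption (S2): rationally, there is a family of integer classes of size roughly $L^a$ (for some integer $a$) all of whose rational invariants satisfy the Lipschitz-$L$ bounds, but only a proper sub-lattice is genuinely realizable by an $L$-Lipschitz map, of density chosen to cut the count down to $L^{p/q}$. The condition $r>4$ suggests that the smallest ``Hopf-type'' scaling factor of $L^4$ (the growth of $[S^3,S^2]$) is the natural baseline for this construction.

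For the construction itself, I would take $Y_r$ to be a rationally two-stage space with a pair of rational homotopy generators whose relation is a secondary operation in the spirit of the Hopf invariant, tuned so that integer realizability of a given rational invariant demands that its coefficients lie in a prescribed sub-lattice. The domain $X_r$ would be built from spheres and cells attached so that its rational cohomology pairs with those invariants, with just enough top-dimensional cells that the $k$ invariants $I_1, \ldots, I_k$ can be varied independently subject to scaling bounds $|I_j(f)| \leq C L^{a_j}$. A tunable parameter (say, the number of sphere factors or the degrees of the Postnikov generators of $Y_r$) would then allow one to dial both the ambient exponent $\sum a_j$ and the index of the sub-lattice in the integer lattice independently, and by choosing them so that the lattice-point count in the anisotropic box $\{|I_j| \leq L^{a_j}\}$ restricted to the sub-lattice scales as $L^{p/q}$, one realizes any target rational $r > 4$.

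The main obstacle will be the sharp upper bound: showing that integer classes \emph{outside} the prescribed sub-lattice genuinely require Lipschitz constant growing faster than $L$. This is an obstruction-theoretic statement that requires converting each stage of the Postnikov tower of $Y_r$ into a quantitative lower bound on the Lipschitz constant, along the lines of the framework developed in \cite{IRMC}, and then tracking how these inequalities compose across multiple stages. The matching lower bound is easier: one writes down $\Omega(L^r)$ maps of Lipschitz constant $O(L)$ explicitly, using the standard scaling and telescope constructions of quantitative topology to realize each element of the sub-lattice at the required scale. Once both bounds are in hand, counting realizable classes in the Lipschitz-$L$ ball reduces to a purely arithmetic count of lattice points in an anisotropic region intersected with the sub-lattice, which evaluates to $L^{r + o(1)}$ and yields the theorem.
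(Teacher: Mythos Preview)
Your proposal has a genuine gap in the core mechanism. You propose that the non-integer exponent arises from a \emph{sub-lattice} phenomenon: an ambient box of integer invariants of size $L^a$ (integer $a$), of which only a fixed sub-lattice is $L$-Lipschitz realizable, with the index tuned to produce $L^{p/q}$. But a fixed sub-lattice of $\mathbb{Z}^k$ has positive density, so intersecting it with a box of volume $\Theta(L^a)$ still gives $\Theta(L^a)$ lattice points; the exponent cannot change. An $L$-dependent sub-lattice makes no sense for a fixed pair $(X,Y)$. So the arithmetic count you describe at the end, ``lattice points in an anisotropic region intersected with the sub-lattice,'' cannot yield a non-integer exponent.

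The mechanism in the paper is different and geometric rather than arithmetic. One takes $X=(S^\ell\vee S^m)\cup_\zeta D^n$ where $\zeta$ is an iterated Whitehead product involving $\id_{S^\ell}$ with multiplicity $p$ and $\id_{S^m}$ with multiplicity $q$, and $Y$ is $X$ with $\pi_n$ killed. A cellular map is determined (up to finite ambiguity) by two degrees $a,b$; the crucial point is that bilinearity of the Whitehead product forces the degree on the $n$-cell to equal $a^p b^q$, so an $L$-Lipschitz cellular map must satisfy the \emph{additional} inequality $|a^p b^q|\leq L^n$ on top of $|a|\leq L^\ell$, $|b|\leq L^m$. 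The region cut out by these three inequalities is not a box but a curved region whose volume is $\Theta(L^r)$ with $r=\ell+m+(2-p-q)/q$, and one checks that every rational $r>4$ is hit by some choice of $\ell,m,p,q$. The upper bound is then a simple degree argument on the top cell (no multi-stage Postnikov analysis is needed), and the lower bound is obtained by explicitly extending over the $n$-cell using efficient Whitehead-product representatives together with Theorem~\ref{thm:sym}, which costs an $\epsi$ in the exponent but still yields $\log_L g_{[X,Y]}(L)\to r$. Your intuition that the failure of (S2) is responsible is correct, but the shape of the failure is a polynomial constraint carving a curved region out of the invariant box, not a lattice-index phenomenon.
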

In other words, for every sufficiently large rational $r$ we have a growth
function which is more like $L^r$ than $L^{r'}$ for any other $r' \in \mathbb{Q}$.
A plausible conjecture about Lipschitz nullhomotopies implies that this growth
function is in fact $\Theta(L^r)$, but for now we cannot exclude the possibility
that it is slightly slower.  In addition, we give another example whose growth
function includes a logarithm.

\subsection*{Techniques}
All of these examples are constructed in essentially the same way.  The space $X$
consists of a wedge of two spheres $S^a \vee S^b$ of different dimensions,
together with a single third cell of some high dimension $n$; the space $Y$ is
obtained from $X$ by gluing on additional $(n+1)$-cells, killing $\pi_n(X)$.
A cellular map $X \to Y$ is determined (up to a bounded indeterminacy) by its
degrees on the $a$- and $b$-cells; this in turn determines the degree on the
higher-dimensional cell, putting an extra constraint on what the two invariants
can be for an $L$-Lipschitz map.

Another way of seeing this is via Lipschitz nullhomotopies.  Any map
$f:S^a \vee S^b \to Y$ (which is determined by two obstructions in $\mathbb{Z}$)
extends uniquely up to homotopy to $X$, via a nullhomotopy of $f \circ \partial$,
where $\partial$ is the attaching map of the top cell.  However, this extension
may be forced to have a much larger Lipschitz constant.  In other words, while
there is no absolute obstruction to performing the extension, there is a
quantitative obstruction in the form of a relatively long finite bar in the
persistence homotopy of $\pi_{n-1}(Y)$.

This phenomenon is closely related to another in quantitative homotopy theory:
the \emph{distortion} of homotopy group elements.  The distortion function of an
element $\alpha \in \pi_n(Y)$ is
$$\delta_\alpha(k)=\inf\{\Lip f \mid f:S^n \to Y, [f]=k\alpha\}.$$
In the given examples, the attaching map $S^{n-1} \xrightarrow{\partial}
S^3 \vee S^4$ of the top cell has a slow-growing distortion function; in other
words, for large $N \in \mathbb{Z}$, there is a relatively small map
$S^{n-1} \to S^3 \vee S^4$ whose homotopy class is $N[\partial]$.  However, in
order to nullhomotope this map in $Y$, one still needs a homotopy which has
relative degree $N$ over the $n$-cell; this means that the Lipschitz constant of
the homotopy is forced to be at least $\sim N^{1/n}$.

\subsection*{Outline of the paper}
Section 2 presents various background information needed for the main
constructions.  We invite the reader to skip it and refer back to it as needed.
The zoo advertised in the title is presented in Section 3.  Section 4 discusses
some situations in which the assumption (S2) holds and therefore Gromov's
prediction is closer to correct.

\section{Technical background}

\subsection{Cellular maps} \label{S:cell}

To enumerate homotopy classes between CW complexes, it is enough to consider
cellular maps.  The same turns out to be the case if we want to enumerate those
with an $L$-Lipschitz representative (up to a constant depending on the metrics
on $X$ and $Y$).
\begin{lem} \label{lem:cel}
  Let $X$ and $Y$ be piecewise Riemannian finite CW complexes with Lipschitz
  attaching maps.  Then any $L$-Lipschitz map $X \to Y$ can be deformed to a
  cellular one whose Lipschitz constant is $C(X,Y)(L+1)$.
\end{lem}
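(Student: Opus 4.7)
The plan is to prove this by a Lipschitz-controlled version of classical cellular approximation. Because the attaching maps of $X$ and $Y$ are Lipschitz, I may first pass to a bi-Lipschitz simplicial triangulation refining both CW structures. Then I would iteratively barycentrically subdivide $X$ about $O(\log L)$ times, producing a simplicial refinement $X'$ whose simplices all have diameter at most $c/L$, where $c = c(Y) > 0$ is a small constant chosen relative to the cell sizes of $Y$. This subdivision changes only the combinatorial data, not the metric or the Lipschitz constant of $f$; crucially, $(X')^{(k)} \supseteq X^{(k)}$ for every $k$, so a map cellular with respect to $X'$ is automatically cellular with respect to the original structure of $X$.

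Next, I would induct on $k$ from $0$ up to $\dim X$, with the hypothesis that $f$ has been deformed to a map $f_{k-1}$ with $\Lip(f_{k-1}) = O(L+1)$ sending $(X')^{(k-1)}$ into $Y^{(k-1)}$. For each $k$-simplex $\tau$ of $X'$, I would push $f_{k-1}|_\tau$ rel boundary into $Y^{(k)}$ by iteratively pushing its image out of the cells $\sigma$ of $Y$ with $\dim \sigma > k$, in decreasing order of dimension. Each push is a radial retraction $\sigma \setminus \{p_\sigma\} \to \partial \sigma$ from a suitably chosen center $p_\sigma \in \intr(\sigma)$, extended by the identity outside $\sigma$.

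The main obstacle is choosing $p_\sigma$ so that the radial retraction is $O(1)$-Lipschitz on the image $f_{k-1}(\tau) \cap \sigma$, equivalently so that $\dist(p_\sigma, f_{k-1}(\tau) \cap \sigma) = \Omega(1)$. Here the subdivision pays off: since $\tau$ has diameter $O(1/L)$ and $f_{k-1}$ is $O(L+1)$-Lipschitz, the image $f_{k-1}(\tau)$ has diameter $O(1)$ and sits in a ball of constant radius inside $\sigma$, leaving ample room for $p_\sigma$ at constant distance. Without such a subdivision, $f(\tau)$ could have $k$-dimensional volume of order $L^k$, and the best Fubini-type estimate would only give $\dist(p_\sigma, f(\tau)) = O(L^{-k/(\dim \sigma - k)})$, which is insufficient for the $O(L+1)$ Lipschitz bound we want.

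Compatibility of the local modifications across simplices is automatic: since $f_{k-1}(\partial \tau) \subset Y^{(k-1)}$ does not meet $\intr(\sigma)$ for any $\sigma$ with $\dim \sigma \geq k$, each homotopy fixes $\partial \tau$, and adjacent simplices' modifications glue along their common faces. Tracking the multiplicative Lipschitz constants over the $O(\dim X \cdot \dim Y)$ dimension-reduction steps produces a total blow-up bounded by a constant $C(X,Y)$, yielding a cellular map $g$ with $\Lip(g) \leq C(X,Y)(L+1)$ as claimed.
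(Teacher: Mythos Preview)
Your approach is sound and amounts to reproving, in context, the quantitative simplicial approximation theorem that the paper simply imports from \cite{CDMW}. The paper's own proof is much shorter: it replaces $X$ and $Y$ by Lipschitz homotopy equivalent simplicial complexes $X'$, $Y'$ via cellular equivalences, invokes the cited theorem to obtain a simplicial (hence cellular) $C(L+1)$-Lipschitz map $X' \to Y'$, and composes back. Your route is more self-contained---it unpacks the black box via the subdivide-then-radially-retract mechanism---and has the mild advantage of working directly with the original cells of $Y$ rather than passing through an auxiliary simplicial model.

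One step you skate over: after modifying $f_{k-1}$ on each $k$-simplex $\tau$ rel $\partial\tau$, you have produced a map only on $(X')^{(k)}$, but your inductive hypothesis requires an $f_k$ defined on all of $X$ (at the next stage you need $f_k|_\Sigma$ for each $(k{+}1)$-simplex $\Sigma$). The extension is routine---run the radial-retraction homotopies on a collar of $\partial\Sigma$ and place $f_{k-1}|_\Sigma$ on the shrunken interior, or equivalently invoke Lipschitz homotopy extension for the simplicial pair $((X')^{(k+1)},(X')^{(k)})$---but it contributes another $O(1)$ factor and should be stated. Relatedly, within a single stage the retraction constants compound: after the first push the Lipschitz constant of $f|_\tau$ has already grown, so the image is no longer of diameter $\leq 2c$ but $\leq 2Mc$, then $\leq 2M^2c$, etc. You must therefore choose $c$ small enough to absorb the full product of the $O(\dim X \cdot \dim Y)$ retraction constants, not merely bound the initial image; your closing sentence gestures at this but the diameter estimate as written only justifies the first push.
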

Thus if every cellular representative of a homotopy class is $\geq L$-Lipschitz,
then every representative is $\geq (L-1)/C$-Lipschitz.
\begin{proof}
  We can convert $X$ and $Y$ into homotopy equivalent simplicial complexes $X'$
  and $Y'$ skeleton-by-skeleton: at each stage, the attaching maps of cells in
  the next dimension are homotopic to simplicial maps on some triangulation of
  the boundary of the cell.  In particular, if we put the standard metric on each
  simplex of $X'$ and $Y'$, we get Lipschitz homotopy equivalences in both
  directions which we can also ensure are cellular.

  Now, given an $L$-Lipschitz map $f:X \to Y$, we can pre- and postcompose to get
  a $C_1L$-Lipschitz map $X' \to Y'$.  By the Lipschitz simplicial approximation
  theorem given in \cite{CDMW}, this map is homotopy equivalent to a simplicial
  map on a suitable subdivision of $X'$ which is $C_2(C_1L+1)$-Lipschitz.
  Finally we pre- and postcompose to get a cellular map $X \to Y$.
\end{proof}
So if we want an upper bound on the growth of $[X,Y]$, it is enough to obtain an
upper bound on the number of homotopy classes of $L$-Lipschitz cellular maps in
terms of $L$.  One way of obtaining such upper bounds is via degree
considerations:
\begin{lem}
  Suppose that $f:X \to Y$ is a cellular $L$-Lipschitz map.  Then for any
  $n$-cell $\sigma$ of $X$ and $n$-cell $\tau$ of $Y$, the induced map
  $$(\sigma,\partial \sigma) \to (Y^{(n)},Y^{(n)} \setminus \intr(\tau))$$
  has relative degree $O(L^n)$.
\end{lem}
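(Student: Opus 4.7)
The plan is to compute the relative degree as an integral of a pulled-back differential form, in the spirit of the argument for $[S^n,S^n]$ alluded to in the introduction. I would pick a smooth, compactly supported $n$-form $\omega$ on $\intr(\tau)$ with $\int_\tau \omega = 1$, and extend it by zero to an $n$-form on $Y^{(n)}$ which vanishes on a neighborhood of $Y^{(n)} \setminus \intr(\tau)$. Because $f$ is cellular, $f(\partial\sigma) \subseteq Y^{(n-1)}$, so $f|_\sigma$ is a genuine map of pairs, and its relative degree is precisely $\int_\sigma f^*\omega$: the form $\omega$ represents a generator of $H^n(Y^{(n)}, Y^{(n)}\setminus\intr(\tau);\mathbb{R})$, its pullback represents (degree) times a generator of $H^n(\sigma,\partial\sigma;\mathbb{R})$, and integration pairs this with the relative fundamental class.

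The next step is to bound this integral using the Lipschitz hypothesis. Since $f|_\sigma$ is $L$-Lipschitz and $\sigma$ is piecewise Riemannian, Rademacher's theorem gives that $f|_\sigma$ is differentiable almost everywhere with operator norm of the derivative bounded by $L$; in particular, the Jacobian is bounded a.e.\ by $L^n$ wherever $f$ maps into the smooth interior of $\tau$. Therefore
\[
\biggl|\int_\sigma f^*\omega\biggr| \leq L^n \,\|\omega\|_\infty \,\vol(\sigma),
\]
and since $\|\omega\|_\infty$ and $\vol(\sigma)$ depend only on the fixed complexes $X$ and $Y$, the right-hand side is $O(L^n)$.

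The only delicate point is justifying the two assertions that implicitly used smoothness---that $\int_\sigma f^*\omega$ computes the relative degree and that the Jacobian bound yields the pointwise estimate $|f^*\omega|\leq L^n\|\omega\|_\infty$ almost everywhere---when $f$ is merely Lipschitz. Both are standard consequences of Rademacher's theorem and the area formula for Lipschitz maps; alternatively, one can mollify $f$ in a bi-Lipschitz chart, observing that this changes the Lipschitz constant by a bounded factor and preserves the homotopy class of the map of pairs. I anticipate no serious obstacle here; the whole argument is essentially Gromov's original homological upper bound, written out in relative form.
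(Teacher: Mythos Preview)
Your proposal is correct and follows essentially the same argument as the paper: choose a bump $n$-form $\omega$ supported in $\intr(\tau)$ with $\int_\tau\omega=1$, identify the relative degree with $\int_\sigma f^*\omega$, and bound the integrand pointwise by $L^n\lVert\omega\rVert_\infty$. The only cosmetic difference is that the paper disposes of the smoothness issue in one line by approximating $f$ by an $(L+\epsi)$-Lipschitz smooth map, whereas you invoke Rademacher's theorem (or mollification) directly; these are interchangeable and neither introduces any new idea.
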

\begin{proof}
  Since $f$ can be approximated by an $(L+\epsi)$-Lipschitz smooth map, we can
  assume that it is smooth.  Let $\omega$ be an $n$-form supported on the
  interior of $\tau$ with $\int_\tau \omega=1$ and
  $\lVert \omega \rVert_\infty \leq C$.  (Here the $\infty$-norm of a form is the
  supremum over its values on all frames of unit vectors.)  Then
  $$\left\lvert \int_\sigma f^*\omega \right\rvert
  \leq \vol\sigma \lVert f^*\omega \rVert_\infty
  \leq C\vol\sigma\lvert \Lip f \rvert^n.$$
  But this integral is the relative degree we are looking for.
\end{proof}

\subsection{Nullhomotopy and extension}

Suppose that we are constructing a cellular map $X \to Y$ skeleton-by-skeleton.
We would like to extend a map $f_{n-1}:X^{(n-1)} \to Y$ to the $n$-cells of $X$.
What restrictions are there on the degrees of the map on the $n$-cells?

Let $\alpha \in \pi_{n-1}(X^{(n-1)})$ be the attaching map of an $n$-cell, and
consider the pair of exact sequences
$$\xymatrix{
  \pi_n(X) \ar[r] \ar[d]^{f_*} & \pi_n(X,X^{(n-1)}) \ar[r] \ar[d]^{f_*} &
  \pi_{n-1}(X^{(n-1)}) \ar[d]^{(f_{n-1})_*} \\
  \pi_n(Y) \ar[r] & \pi_n(Y,Y^{(n-1)}) \ar[r] & \pi_{n-1}(Y^{(n-1)}).
}$$
Since $\pi_n(Y,Y^{(n-1)}) \cong H_n(Y)$, the middle arrow controls the degrees we
are interested in; since the diagram commutes, they are determined up to the
Hurewicz image of $\pi_n(Y)$.  For example, if $X$ and $Y$ each have a single
$n$-cell which kills a rationally nontrivial element of $\pi_{n-1}$, then the
Hurewicz image is trivial and the degree on that cell is determined by the
homotopy type of $f_{n-1}$.  This will be used in the construction in Section 3.

This observation was used in \cite{CDMW} to show that certain homotopically
trivial maps are hard to nullhomotope.  For example, let $Z_4$ be the space
obtained from $S^2 \vee S^2$ by gluing on two $5$-cells killing
$\pi_4(S^2 \vee S^2) \otimes \mathbb{Q}$.  While $\pi_4(Z_4)$ is trivial, an
$L$-Lipschitz map $S^4 \to Z_4$ may only be nullhomotopic via an
$\Omega(L^{6/5})$-Lipschitz homotopy.  The argument showing this is very similar
in flavor to the arguments in this paper.


\subsection{Smoothly formal spaces}

We say that a piecewise Riemannian space $Y$ is \emph{smoothly formal} if there
is a splitting algebra homomorphism $H^*(Y;\mathbb{Q}) \to \Omega^*(Y)$.  Note
that a smoothly formal space is in particular formal in the sense of Sullivan.
The only examples of smoothly formal spaces we are aware of are wedges of
Riemannian symmetric spaces.

The quantitative homotopy theory of such spaces is particularly nice.  In
particular, we have the following theorem, contrasting with the observation about
the space $Z_4$ in the previous subsection.
\begin{thm}[see \S5.3 of \cite{PCDF}] \label{thm:sym}
  If $X$ and $Y$ are finite complexes and $Y$ has the rational homotopy type of a
  simply connected smoothly formal space, then every nullhomotopic $L$-Lipschitz
  map $X \to Y$ has an $O(L\exp(\kappa(X,Y)\sqrt{\log L}))$-Lipschitz
  nullhomotopy.  In particular, for $X=S^n$ (or any other co-H-space), any two
  homotopic $L$-Lipschitz maps $X \to Y$ are homotopic via a
  $O(L\exp(\kappa(X,Y)\sqrt{\log L}))$-Lipschitz homotopy.
\end{thm}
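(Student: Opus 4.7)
My plan is to combine the smoothly formal structure of $Y$ with Sullivan-style obstruction theory to build the nullhomotopy inductively along a minimal model, carefully tracking Lipschitz constants at each stage. Since $Y$ is smoothly formal, its minimal Sullivan model $\mathcal{M}_Y$ can be identified with the bigraded minimal model of $(H^*(Y;\mathbb{Q}),0)$, and the splitting $\rho \colon H^*(Y;\mathbb{Q}) \to \Omega^*(Y)$ extends to a quasi-isomorphic CDGA map $\tilde\rho \colon \mathcal{M}_Y \to \Omega^*(Y)$ whose image consists of forms with bounded pointwise norm. Filter $\mathcal{M}_Y$ by a tower of Hirsch extensions $\mathcal{M}_Y^{(0)} \subset \mathcal{M}_Y^{(1)} \subset \cdots$, adjoining a few generators at a time, so that the nullhomotopy problem becomes a sequence of inductive lifts.

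Given an $L$-Lipschitz nullhomotopic $f \colon X \to Y$, I would construct a CDGA-homotopy from $f^* \tilde\rho$ to the augmentation, extending one Hirsch stage at a time. A degree-$k$ generator pulls back to a form of sup-norm $O(L^k)$, and each inductive step reduces to solving $d\eta = \omega$ on $X \times [0,T]$ with $\omega$ closed of controlled norm; a bounded-primitive lemma (a polynomial-norm Poincar\'e lemma for finite complexes, essentially integration along a Lipschitz retract) produces $\eta$ with $\lVert \eta \rVert_\infty = O(T \lVert \omega \rVert_\infty)$. The crucial benefit of the smoothly formal hypothesis is that at every stage the form to be primitized is a \emph{genuine} polynomial in the chosen basis forms, not just a representative of its cohomology class, so its sup-norm is controlled by a literal product of norms of the building blocks.

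To realize this algebraic homotopy geometrically, I would fix a Lipschitz polyhedral realization of $Y$ that reflects the Postnikov tower of $\mathcal{M}_Y$, and extend $f$ cell-by-cell over $X \times I$ using Lemma~\ref{lem:cel} and the Lipschitz simplicial approximation theorem of \cite{CDMW}. At stage $k$, the Lipschitz constant multiplies by a factor determined by the primitive chosen in that stage together with the length $T_k$ of the extra $I$-direction. Running the induction for $N$ stages produces a total homotopy Lipschitz constant of the form $L \cdot \prod_k C_k$ where each $C_k \sim e^{\kappa/\sqrt{N}}$ under the optimal choice of $T_k$; taking $N \sim \sqrt{\log L}$ balances the contributions and yields the bound $L \exp(\kappa\sqrt{\log L})$. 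The co-H-space corollary is then immediate: the difference of two homotopic maps from a co-H-space factors as a nullhomotopic map to which the main bound applies.

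The hard part is the simultaneous control of algebraic and geometric data. The bounded-primitive lemma yields a form with controlled sup-norm, but realizing it as a genuine continuous (in fact Lipschitz) map to $Y$ with matching constant requires the cellular structure on $Y$ and the map $\tilde\rho$ to be compatible throughout the induction. Without a strict algebra splitting, the errors from approximating products of cohomology classes by products of representatives would compound across stages, forcing the Lipschitz cost per stage up to a factor $L^{\epsilon}$ and ruining the $\exp(\sqrt{\log L})$ regime. The delicate piece of the argument is therefore the inductive invariant coupling the algebraic CDGA-homotopy with the geometric map via norm bounds that remain stable under the Hirsch extensions.
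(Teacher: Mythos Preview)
The paper does not actually prove Theorem~\ref{thm:sym}; it is quoted from \cite{PCDF} and used as a black box throughout. So there is no ``paper's own proof'' to compare against, and your task was really to reconstruct an argument from the referenced source.

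Your outline is broadly in the right spirit: the point of smooth formality is precisely that the CDGA map $\tilde\rho \colon \mathcal{M}_Y \to \Omega^*(Y)$ lands in honest bounded forms, so that products of generators have sup-norm controlled by products of sup-norms, and the inductive obstruction-theoretic construction of a homotopy through the Hirsch tower does not accumulate uncontrolled error terms. The co-H-space deduction at the end is also correct.

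However, there is a genuine gap in your account of where the $\exp(\kappa\sqrt{\log L})$ factor comes from. You write that the induction runs for $N$ stages with per-stage cost $e^{\kappa/\sqrt{N}}$, and then you ``take $N \sim \sqrt{\log L}$''. But the number of Hirsch extensions in a minimal model of $Y$ truncated at $\dim X$ is a fixed finite number determined by the rational homotopy of $Y$ and the dimension of $X$; it is not a parameter you are free to tune as a function of $L$. So as written, your balancing argument does not make sense. The actual source of the subexponential correction in \cite{PCDF} is more delicate: with a fixed number of stages, each stage contributes a multiplicative factor coming from the isoperimetric/Poincar\'e constants and the interaction between the time-length parameters $T_k$ and the degrees of the forms involved, and optimizing those $T_k$ (not the number of stages) over a fixed-length product is what produces $\exp(\kappa\sqrt{\log L})$ rather than a genuine power of $L$. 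You should revisit that step and make explicit which quantities are being balanced.
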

This known bound is $o(L^{1+\epsi})$ for every $\epsi>0$.  In fact, I believe that
a linear bound can be attained.  This conjecture is recorded here as its truth
would significantly strengthen the results of this paper.
\begin{conj} \label{conj:sym}
  If $X$ and $Y$ are finite complexes and $Y$ has the rational homotopy type of a
  simply connected smoothly formal space, then every nullhomotopic $L$-Lipschitz
  map $X \to Y$ has an $O(L)$-Lipschitz nullhomotopy.
\end{conj}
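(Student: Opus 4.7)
The plan is to refine the Sullivan-model approach that produced Theorem \ref{thm:sym}. Smooth formality of $Y$ gives an algebra splitting $\rho: H^*(Y;\mathbb{Q}) \to \Omega^*(Y)$; composing with the minimal Sullivan model $(\Lambda V, d)$ of the cohomology ring yields a smooth Sullivan model $\Phi: (\Lambda V, d) \to \Omega^*(Y)$ in which every generator $v \in V$ of degree $n$ maps to a specific bounded form. For an $L$-Lipschitz map $f: X \to Y$, pullback gives a DGA map $f^*\Phi: \Lambda V \to \Omega^*(X)$ with $\|(f^*\Phi)(v)\|_\infty = O(L^n)$ on each generator. A nullhomotopy of $f$ corresponds, under the realization procedure available for formal targets in \cite{PCDF}, to a DGA extension $\Psi: \Lambda V \to \Omega^*(CX)$ whose restriction to the base of the cone is $f^*\Phi$, and a linear $L^\infty$-bound $\|\Psi(v)\|_\infty = O(L^n)$ on each generator translates into an $O(L)$-Lipschitz nullhomotopy.

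The DGA extension $\Psi$ is constructed inductively on the degree of the generators of $V$. Suppose $\Psi$ has been defined on all generators of degree $< n$; for a generator $v$ of degree $n$ the form $\Psi(dv)$ is already prescribed, is closed, has $L^\infty$-norm $O(L^{n+1})$, and must be exact on $CX$ (since $CX$ is contractible). The inductive step is to produce a primitive $\Psi(v)$ on $CX$ satisfying $d\Psi(v) = \Psi(dv)$ with the improved bound $\|\Psi(v)\|_\infty = O(L^n)$. Solving this bounded primitive problem at every stage and then applying the formal realization functor would complete the proof.

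The main obstacle is exactly this bounded primitive problem. In the existing proof of Theorem \ref{thm:sym}, the primitive is chosen via a Hodge-theoretic Green operator on $CX$, which loses a factor slightly super-linear in $L$ at each Postnikov stage; these losses accumulate into the $\exp(\kappa\sqrt{\log L})$ factor. To remove them I would instead triangulate $CX$ at scale $\sim 1/L$, project the closed form $\Psi(dv)$ onto simplicial cochains via Whitney's map, solve the resulting simplicial primitive problem by a uniformly bounded chain homotopy (available because the simplicial cohomology class vanishes), and transport back to forms via the Whitney embedding. The hard part will be controlling the norms uniformly across all Postnikov stages: this appears to require a quantitative version of the Deligne--Griffiths--Morgan--Sullivan formality theorem that keeps track of $L^\infty$-norms of cochain-level homotopies and of the products involved in the inductive step. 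Developing such a refinement is, in my view, the principal technical obstacle to proving the conjecture; once obtained, the proof above should go through essentially verbatim.
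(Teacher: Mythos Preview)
The statement you are attempting to prove is Conjecture~\ref{conj:sym}: it is recorded in the paper explicitly as an \emph{open conjecture}, not as a theorem, and the paper contains no proof of it. The author writes that the known bound from Theorem~\ref{thm:sym} is $o(L^{1+\epsi})$ and that he ``believe[s] that a linear bound can be attained,'' but does not establish this. So there is no ``paper's own proof'' to compare against.

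Your proposal is, correspondingly, not a proof either, and you say so yourself: the final paragraph identifies the ``principal technical obstacle'' as an as-yet-undeveloped quantitative refinement of formality that controls $L^\infty$-norms of primitives and products through all Postnikov stages. That is an honest assessment, and it is exactly the gap. The outline you give up to that point is essentially the strategy behind the proof of Theorem~\ref{thm:sym} in \cite{PCDF}, and the place where you lose linearity is the same place that argument does. Your suggested fix (pass to simplicial cochains at scale $1/L$, solve the primitive problem there, and transport back via Whitney forms) runs into the well-known difficulty that the Whitney map is not multiplicative, so the algebra structure needed to define $\Psi(dv)$ as a product of earlier $\Psi(v_i)$'s is not preserved; you acknowledge the product issue, but it is not a detail to be cleaned up---it is the heart of the problem. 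One should also be cautious about the claim that an $O(L^n)$ bound on $\|\Psi(v)\|_\infty$ ``translates into an $O(L)$-Lipschitz nullhomotopy'': the realization step from DGA homotopies to genuine Lipschitz homotopies is itself nontrivial and in \cite{PCDF} already contributes to the superlinear loss.

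In short: the paper does not prove this statement, your proposal does not prove it either, and the obstacle you name is genuine and currently unresolved.
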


\subsection{Wedges of spheres}

An important class of smoothly formal spaces is wedges of spheres.  Here we
describe the homotopy theory of the space $\bigvee_i S^{n_i}$.

Let $X$ be any space.  Given maps $f:S^i \to X$ and $g:S^j \to X$, their
\emph{Whitehead product} $[f,g]:S^{i+j-1} \to X$ is the composition
$$S^{i+j-1} \xrightarrow{\text{attaching map of the $(i+j)$-cell of }
  S^i \times S^j} S^i \vee S^j \xrightarrow{f \vee g} X.$$
Note that while we defined the Whitehead product of two maps, the Whitehead
product is also well-defined on the level of homotopy groups.  (We will use the
same notation for the map- and homotopy-level products.)  In fact, up to
homotopy, it is bilinear, graded commutative, and satisfies the graded Jacobi
identity
$$(-1)^{ij}[f,[g,h]]+(-1)^{jk}[g,[h,f]]+(-1)^{ki}[h,[f,g]]=0,$$
for $f:S^i \to X$, $g:S^j \to X$, and $h:S^k \to X$.  In other words, the
Whitehead product defines a graded Lie bracket on the graded module
$\pi_{*+1}(X)$ (i.e.~elements of $\pi_2(X)$ have degree 1, etc.)

For $X=\bigvee_i S^{n_i}$, the rationalization $\pi_{*+1}(X) \otimes \mathbb{Q}$ is
in fact a \emph{free} graded Lie algebra over $\mathbb{Q}$ generated by an
$(n_i-1)$-dimensional generator for each $n_i$.

Given an ordering on the generators of a free Lie algebra over a field of
characteristic zero, one obtains a \emph{Hall set} of monomials which form a
basis for the Lie algebra as a vector space.
It is not hard to see that this Hall set, together with $[a,a]$ for every element
$a$ of odd degree in the Hall set, still forms a basis for the free \emph{graded}
Lie algebra; cf.~\cite{Hilton}, where this is shown for the homotopy algebra of a
wedge of spheres of the same dimension.  We follow Hilton's formulation of the
Hall set, which differs slightly from some more modern treatments.

\section{The zoo}

In this section we give a large number of simple examples of spaces $X$ and $Y$
for which the growth of $[X,Y]$ can be determined enough to rule out polynomial
behavior.  Each is given by a few cells with attaching maps given by iterated
Whitehead products.
\begin{ex} \label{ex1}
  Let $X=(S^3 \vee S^4) \cup_{[\id_{S^3},[\id_{S^3},\id_{S^4}]]} D^9$, and let $Y$ be the
  space obtained from $X$ by adding $10$-cells to kill $\pi_9(X)$.  Then the
  growth of $[X,Y]$ is $O(L^{13/2})$ and $\omega(L^{13/2-\epsi})$ for every
  $\epsi>0$.
\end{ex}
Note that purely obstruction-theoretic concerns would suggest that there
should be $\Theta(L^7)$ homotopy classes of $L$-Lipschitz maps in this case as
well as that of Example \ref{ex2}.  Indeed, in both cases the growth turns out to
be slower.

If we assume Conjecture \ref{conj:sym}, the $O(L^{13/2})$ bound is sharp by the
same argument using the conjecture instead of Theorem \ref{thm:sym}.
\begin{proof}
  By the discussion in \S\ref{S:cell}, we can assume that maps $X \to Y$ are
  cellular.  By obstruction theory, since $H^k(X;\pi_k(Y))$ is only nonzero in
  degrees $3$ and $4$, the homotopy class of such a map is determined by the
  degrees of the restriction-projections $\alpha:S^3 \to S^3$ and
  $\beta:S^4 \to S^4$ and a $\mathbb{Z}/2\mathbb{Z}$ invariant $\tau$ coming from
  $\pi_4(S^3)$.  In turn, the two degrees determine the degree on the $9$-cell of
  $X$: the boundary of this cell is mapped to $S^3 \vee S^4 \subset Y$ via
  $[\alpha,[\alpha,\beta \vee \tau]]$, and the bilinearity of the Whitehead
  product means that any nullhomotopy of this boundary map in $Y$ must have
  degree $(\deg\alpha)^2\deg\beta$.

  Assume to simplify notation that the metric on $Y$ extends that on $X$.  A
  cellular map with Lipschitz constant $\leq L$ must then satisfy
  $$\left\{
  \begin{aligned}
    \lvert\deg\alpha\rvert &\leq L^3 \\
    \lvert\deg\beta\rvert &\leq L^4 \\
    \lvert(\deg\alpha)^2\deg\beta\rvert &\leq L^9.
  \end{aligned}\right.$$
  An integral shows that the area of the region in $\mathbb{R}^2$ satisfying
  these conditions is $O(L^{13/2})$; the number of lattice points is similar.
  This demonstrates the upper bound.

  For the lower bound, we need to actually exhibit $\Omega(L^{13/2-\epsi})$
  different maps whose Lipschitz constant is $O(L)$, for every $\epsi>0$.  So fix
  $\epsi$, and consider any $a, b \in \mathbb{Z}$ satisfying
  \begin{equation} \label{bounds} \left\{
    \begin{aligned}
      \lvert a \rvert &\leq L^{3-\epsi} \\
      \lvert b \rvert &\leq L^{4-\epsi} \\
      \lvert a^2b \rvert &\leq L^{9-2\epsi}.
    \end{aligned}\right.
  \end{equation}
  We will construct a cellular $O(L)$-Lipschitz map $f_{a,b}:X \to Y$ whose
  degrees on the $3$- and $4$-cell are $a$ and $b$, respectively.  Write
  $u_{k,d}:S^k \to S^k$ for a maximally efficient, hence $O(d^{1/k})$-Lipschitz map
  of degree $d$.  We start by setting $f_{a,b}$ to be $u_{3,a}$ on the 3-cell and
  $u_{4,b}$ on the 4-cell.  All that is left is to extend to the 9-cell.

  As a shorthand, given a map $f:S^k \to S^k$, we write $d \cdot f$ to mean
  $f \circ u_{k,d}$ (a particular efficient representative of the homotopy class
  $d[f]$.)  Then let $g_1:S^8 \to S^3 \vee S^4$ be given by
  $[u_{3,a},[u_{3,a},u_{4,b}]]$ and $g_2:S^8 \to S^3 \vee S^4$ be given by
  \begin{equation} \label{g2}
    S^8 \xrightarrow{\text{pinch the equator}} S^8 \vee S^8
    \xrightarrow{\left[s \cdot \id_{S^3},t \cdot [\id_{S^3},\id_{S^4}]\right]
      \vee e \cdot \left[\id_{S^3},[\id_{S^3},\id_{S^4}]\right]} S^3 \vee S^4,
  \end{equation}
  where $s=\lfloor L^{3-\epsi} \rfloor$, $t=\lfloor L^{-(3-\epsi)}a^2b \rfloor$, and
  $e=a^2b-st$ is an $O(L^6)$ correction term that makes $g_1$ and $g_2$
  homotopic.  Note that $g_1$ and $g_2$ are both $O(L^{1-\epsi/4})$-Lipschitz.
  Then by Theorem \ref{thm:sym}, there is an $O(L)$-Lipschitz homotopy between
  them in $S^3 \vee S^4$.

  Moreover, $g_2$ extends to an $O(L^{1-\epsi/4})$-Lipschitz map $D^9 \to Y$.  We
  demonstrate this by giving separate nullhomotopies of
  $$\left[s \cdot \id_{S^3},t \cdot [\id_{S^3},\id_{S^4}]\right]\quad\text{and}\quad
  e \cdot \left[\id_{S^3},[\id_{S^3},\id_{S^4}]\right].$$
  The second of these maps has an $O(L^{3/4})$-Lipschitz nullhomotopy given by
  composing the attaching map $D^9 \to Y$ of the 9-cell with the cone
  $Cu_{8,e}:D^9 \to D^9$ of $u_{8,e}$.  The first map has an
  $O(L^{1-\epsi/4})$-Lipschitz nullhomotopy factoring through a sequence of maps
  $$D^9 \xrightarrow{\text{top cell}} S^3 \times S^6
  \xrightarrow{u_{3,s} \times u_{6,t}} S^3 \times S^6 \to X \subset Y;$$
  here the last arrow sends $* \times S^6$ to $S^3 \vee S^4$ via the Whitehead
  product.

  Finally, we extend $f_{a,b}$ to the $9$-cell of $X$ by concatenating the
  homotopy from $g_1$ to $g_2$ and the nullhomotopy of $g_2$.
\end{proof}
We now give the more general version.  The proof is abbreviated since it is very
similar to the one above once $X$ and $Y$ are constructed.
\begin{thm} \label{thm:ex1}
  For any rational number $r>4$, there are spaces $X$ and $Y$ such that the
  growth of $[X,Y]$ is $O(L^r)$ and $\omega(L^{r-\epsi})$ for any $\epsi>0$.
\end{thm}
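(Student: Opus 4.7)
The plan is to imitate the proof of Example \ref{ex1} verbatim, replacing $(a,b,p,q)=(3,4,2,1)$ with parameters tuned to $r$. Given a rational $r>4$, I seek integers $a,b\geq 2$ (with $a\neq b$ to sidestep graded-commutativity issues with $[x,x]$) and $p,q\geq 1$ satisfying
\[
r \;=\; a+b\;-\;\frac{p+q-2}{\max(p,q)},
\]
a formula that falls out of computing the area of the planar region
$\{|\alpha|\leq L^a,\ |\beta|\leq L^b,\ |\alpha|^p|\beta|^q\leq L^n\}$ with
$n=p(a-1)+q(b-1)+2$.
The construction is then $X=(S^a\vee S^b)\cup_w D^n$ with $w$ a Hall-basis iterated Whitehead product of multi-degree $(p,q)$, and $Y$ obtained from $X$ by killing $\pi_n(X)$ with $(n+1)$-cells.

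For the upper bound, the commuting obstruction-theory square forces a cellular $L$-Lipschitz map with wedge degrees $\alpha,\beta$ to have degree $\alpha^p\beta^q$ on the $n$-cell; counting lattice points in the region cut out by $|\alpha|\leq L^a$, $|\beta|\leq L^b$, $|\alpha^p\beta^q|\leq L^n$ yields $\Theta(L^r)$. For the lower bound, for each admissible pair $(\alpha,\beta)$ (with slightly slackened bounds) I build $f_{\alpha,\beta}$ starting from $u_{a,\alpha}\vee u_{b,\beta}$ on the wedge, then extending to the $n$-cell by the pinch-and-nullhomotope recipe: the bulk $st$ of $\alpha^p\beta^q$ is realized via a factorization
\[
D^n \to S^a\times S^{n-a} \xrightarrow{u_{a,s}\times u_{n-a,t}} S^a\times S^{n-a} \to S^a\vee S^b\subset Y,
\]
where the last map restricts on $*\times S^{n-a}$ to an iterated Whitehead product of multi-degree $(p-1,q)$ (or symmetrically $(p,q-1)$ when $q>p$), and the remainder $e=\alpha^p\beta^q-st$ is killed by composing $Cu_{n-1,e}$ with the attaching map of the $n$-cell of $X$. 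Theorem \ref{thm:sym} glues these pieces into an $O(L)$-Lipschitz extension.

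The main obstacle I foresee is the number-theoretic check that every rational $r>4$ has the form $a+b-(p+q-2)/\max(p,q)$. Writing $r=N+m/d$ in lowest terms with $0\leq m<d$, I would handle three cases: $m=0$ with $q=2$, any $p\geq 2$, $a+b=N+1$; $m=1$ with $q=1$, $p=d$, $a+b=N+1$; and $m\geq 2$ with $q=d-m+2$, $p=d$ (so $p\geq q$), $a+b=N+2$. The hypothesis $r>4$ ensures $N\geq 4$, leaving plenty of room to pick $a,b\geq 2$ with $a\neq b$. A secondary nuisance is Step~4's bookkeeping, since the dominant edge of the integration region (and hence the correct choice of splitting scale $s$) switches between $|\alpha|=L^a$ and $|\beta|=L^b$ depending on whether $p\geq q$ or $p<q$; the factorization must use $S^a\times S^{n-a}$ or $S^b\times S^{n-b}$ accordingly.
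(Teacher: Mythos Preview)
Your approach is exactly the paper's: the same spaces $X=(S^a\vee S^b)\cup_w D^n$ and $Y$, the same degree constraints yielding the region $\{|\alpha|\le L^a,\ |\beta|\le L^b,\ |\alpha^p\beta^q|\le L^n\}$, and the same pinch-and-factor recipe for the lower bound.

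There is, however, one genuine gap in your number theory. Your case $m=1$ with $N=4$ (that is, $r=4+1/d$ for $d\ge 2$) forces $a+b=5$ and hence $\{a,b\}=\{2,3\}$. But $\pi_3(S^2\vee S^3)\cong\mathbb{Z}^2$, so the restriction to the $3$-cell carries an additional \emph{rational} invariant (the Hopf degree into $S^2$), and a cellular map $X\to Y$ is no longer determined up to a finite ambiguity by the two wedge degrees $\alpha,\beta$ alone. This breaks your upper-bound count: the region to be lattice-counted is not the planar one you describe, and the growth exponent changes. The paper sidesteps this by insisting that the larger sphere dimension be at least $4$ (and that the gap $m-\ell\in\{1,2\}$), which forces $\pi_m(S^\ell)$ finite and pushes $\ell+m\ge 6$ for all $r>4$. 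You can repair your argument the same way: for $r\in(4,5]$ take $a+b=6$, say $(a,b)=(2,4)$, and solve $(p+q-2)/\max(p,q)=6-r\in[1,2)$ with $p\neq q$, which is always possible. More generally you should record that $(a,b)$ must be chosen with $\pi_{\max(a,b)}(S^{\min(a,b)})$ finite --- the condition $a\neq b$ alone does not suffice (e.g.\ $(4,7)$ would also be bad).

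A minor aside: the ``secondary nuisance'' you flag is not one. Once $w$ is written with a single generator $\id_{S^c}$ outermost, the nullhomotopy through $S^c\times S^{n-c}$ is $O(L)$-Lipschitz regardless of which $c\in\{a,b\}$ you picked, since $s\sim L^c$ and $t=\alpha^p\beta^q/s\le L^{n-c}$ give $t^{1/(n-c)}\le L$ in either case. The $\max(p,q)$ appears only in the area computation, not in the construction.
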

\begin{proof}
  We fix natural numbers $\ell<m$ and $1 \leq p<q$ such that
  $$r=\ell+m+\frac{2-p-q}{q}.$$
  Note that every rational number in $(\ell+m-2,\ell+m-1]$ can be written this
  way for some $p$ and $q$.  In addition we set $\ell$ and $m$ so that
  $\ell \geq 2$ and $m \geq 4$ and $m-\ell$ is $1$ or $2$.  Fix
  $$n=p(\ell-1)+q(m-1)+2.$$

  Let $\zeta \in \pi_{n-1}(S^\ell \vee S^m)$ be given by the iterated Whitehead
  product
  $$\zeta=[\underbrace{\id_{S^\ell},[\cdots[\id_{S^\ell}}_{p-1\text{ times}},
        [\underbrace{\id_{S^m},[\cdots[\id_{S^m}}_{q\text{ times}},
              \id_{S^\ell}]\cdots]]]\cdots]].$$
  This is an element of infinite order since, for the ordering
  $\id_{S^m}<\id_{S^\ell}$, it is a Hall element of the free Lie algebra.  We let
  $X=(S^\ell \vee S^m) \cup_\zeta D^n$ and $Y$, like before, is obtained from $X$
  by adding $(n+1)$-cells to kill $\pi_n(X)$.  Then a cellular map $X \to Y$ with
  Lipschitz constant $L$ is determined (up to an element of the finite group
  $\pi_m(S^\ell)$) by the degrees $a$ and $b$ of the restrictions to
  $S^\ell \to S^\ell$ and $S^m \to S^m$.  These degrees must satisfy
  \[\left\{
  \begin{aligned}
    \lvert a \rvert &\leq L^\ell \\
    \lvert b \rvert &\leq L^m \\
    \lvert a^p b^q \rvert &\leq L^n.
  \end{aligned}\right.\]
  The volume of this region in $\mathbb{R}^2$ is $\Theta(L^r)$.

  As before, for any $\epsi>0$, we can actually realize $\Omega(L^{r-\epsi})$
  elements which satisfy corresponding slightly stronger bounds.  To do this, we
  take a map $S^\ell \vee S^m \to Y$ with degrees $a$, $b$ satisfying such bounds
  and extend it to the $n$-cell of $X$ via a concatenation of two homotopies: one
  from the induced map on the boundary of this cell to a Whitehead product
  $$\bigl[\lfloor L^{\ell-\epsi} \rfloor \cdot \id_{S^\ell},
    \lfloor L^{-(\ell-\epsi)}a^pb^q \rfloor \cdot
     [\underbrace{\id_{S^\ell},[\cdots[\id_{S^\ell}}_{p-2\text{ times}},
     [\underbrace{\id_{S^m},[\cdots[\id_{S^m}}_{q\text{ times}},\id_{S^\ell}]\cdots]]]
         \cdots ]]\bigr]$$
  together with an $O(L^{n-\ell})$ correction term, followed by a nullhomotopy of
  that map which factors through $S^\ell \times S^m$.
\end{proof}
\begin{ex} \label{ex2}
  Let $X=(S^3 \vee S^4) \cup_{[[\id_{S^3},\id_{S^4}],[\id_{S^3},\id_{S^4}]]} D^{12}$, and let
  $Y$ be the space obtained from $X$ by adding $13$-cells killing $\pi_{12}(X)$.
  Then the growth of $[X,Y]$ is $\Theta(L^6\log L)$.
\end{ex}
\begin{proof}
  This example is in many ways similar to the previous ones.  Once again, an
  $L$-Lipschitz map $X \to Y$ is determined up to a finite kernel by two degrees
  $a$ and $b$ which satisfy
  \begin{equation} \label{bounds2} \left\{
    \begin{aligned}
      \lvert a \rvert &\leq L^3 \\
      \lvert b \rvert &\leq L^4 \\
      \lvert a^2b^2 \rvert &\leq L^{12};
    \end{aligned}\right.
  \end{equation}
  integrating gives $\Theta(L^6\log L)$ such pairs.

  However, in this case we can explicitly construct an $O(L)$-Lipschitz map for
  every such pair of degrees $(a,b)$, using a refinement of the construction in
  Theorem \ref{thm:ex1}.  On the $3$- and $4$-cells, we fix explicit
  $O(L)$-Lipschitz maps $w_a:S^3 \to S^3$ and $w_b:S^4 \to S^4$ of the correct
  degree.  Then on the $12$-cell, the map is a concatenation of two homotopies
  which together form a nullhomotopy of the Whitehead square
  $[[w_a,w_b],[w_a,w_b]]$.

  The first homotopy is a fiberwise Whitehead square $(h_1)_t=[g_t,g_t]$, where
  $g:S^6\times[0,1] \to S^3 \vee S^4$ is an $O(L)$-Lipschitz homotopy
  (constructed explicitly later) which takes
  $$[w_a,w_b] \simeq_{g_t} ab \cdot [\id_{S^3},\id_{S^4}].$$
  Note that the map at time $1$ is $O(L)$-Lipschitz because $ab \leq L^6$; in the
  analogous step in Example \ref{ex1}, where $ab$ may be greater, we needed to
  exploit the bilinearity of both Whitehead products simultaneously, making an
  explicit construction harder to obtain.

  We concatenate this with a nullhomotopy of
  $$[ab \cdot [\id_{S^3},\id_{S^4}],ab \cdot [\id_{S^3},\id_{S^4}]]$$
  which proceeds via the ($O(L)$-Lipschitz) composition
  $$D^{12} \xrightarrow{\text{attaching map}} S^6 \times S^6
  \xrightarrow{u_{6,ab} \times u_{6,ab}} S^6 \times S^6 \to X \subset Y,$$
  where the last arrow takes each of the $6$-cells to $S^3 \vee S^4$ via the
  Whitehead product.

  The key part of the proof is specifying the maps $w_a$, $w_b$, and $g$ more
  precisely in order to ensure that everything is $O(L)$-Lipschitz.  If we knew
  Conjecture \ref{conj:sym}, we would obtain the existence of such a homotopy
  automatically, but this case is simple enough that we can do it by hand.
  Roughly speaking, we keep track of the preimages of the north pole of $S^3$ (a
  3-manifold) and the north pole of $S^4$ (a 2-manifold).  Each preimage starts
  out as a grid of spheres, and these grids are linked together; gradually, we
  tease the links apart, so that at the end the preimages consist of $\leq L^6$
  pairs of linked $S^2$ and $S^3$.  So that this actually extends to an
  $O(L)$-Lipschitz homotopy, we make sure that the bordisms have non-overlapping
  embedded normal neighborhoods of radius $\Omega(1/L)$.  We now give the
  construction in more detail; the bordisms themselves are laid out in Figure
  \ref{fig:htpy}.

  We first specify $w_a$.  We tile the bottom of a unit $3$-cube with $a$
  identical open balls of diameter $1/L$, $L$ to a side.  We get
  $\lceil a/L^2 \rceil$ layers of such balls, with the last layer incomplete.
  Fix an embedding of this cube in $S^3$; we let the map $w_a:S^3 \to S^3$ send
  every point outside the balls inside the cube to the basepoint $*$, and each of
  the balls homeomorphically to $S^3 \setminus *$.  We construct $w_b$
  analogously.  Note that
  $$\left\lceil\frac{a}{L^2}\right\rceil\cdot\Bigl\lceil\frac{b}{L^3}\Bigr\rceil
  \leq 2L.$$

  We now describe the homotopy $g$; this is most easily done replacing the
  dimensions $3$ and $4$ by some general $i,j \geq 2$, and $6$ by $i+j-1$.  We
  give a pictorial step-by-step construction of $g$ which may be taken literally
  when $i,j=2$ but is easily seen to work for other $i,j$.

  The domain $S^{i+j-1}$ of $[w_a,w_b]$ can be split into two solid tori: the
  preimage $D^i \times S^{j-1}$ of $S^i$, on which the map is $w_a$ after
  projection onto the first coordinate, and the preimage $S^{i-1} \times D^j$ of
  $S^j$ on which the map is $w_b$ after projection onto the second coordinate.

  Within the first torus, we have a $[0,1]^i \times S^{j-1}$, the subspace on
  which $w_a$ is nontrivial.  Let's say that the last coordinate represents
  ``depth''; then we complete this $[0,1]^{i-1} \times ([0,1] \times S^{j-1})$ to
  a cylinder $[0,1]^{i-1} \times D^j$ where the map on each filling $* \times D^j$
  restricts to $w_b$.  We then slice this cylinder into $L^{i-1}$ thickened
  $j$-disks which we call ``pancakes''; the case $i=j=2$ is shown in Figure
  \ref{fig:cake}.
  \begin{figure}
    \centering
    \begin{tikzpicture}
      \draw[very thick,color=blue] (0,0) circle (4 and 1.3);
      \draw[very thick,color=blue] (0,0) circle (3.5 and 1.1);
      \foreach \x in {1,...,5}
        \draw[very thick,color=blue] (0,-0.4*\x)+(180:4) arc (180:360:4 and 1.3);
      \foreach \x in {0,1,2} {
        \foreach \y in {0,1,...,5} {
          \fill[color=red] (-0.5+1.2*\x-0.25*\y,-0.5+0.2*\y) circle (0.07);
          \draw[very thick,color=red,-fast cap] (-0.5+1.2*\x-0.25*\y,-0.5+0.2*\y)
              -- (-0.5+1.2*\x-0.25*\y,-0.8+0.1*\y);
        }
      }
      \draw[<->] (180:4.3) -- (180:3.2) node[pos=0.5,anchor=south] {$s$ preimages};
      \draw[<->] (0,0.2)++(0:4.5) -- ++(0,-2.4) node[pos=0.5,anchor=west] {$L$ preimages};
    \end{tikzpicture}
    \caption{The chosen cylinder $S^{i-1} \times D^j$ when $i=j=2$; in this case
      the pancakes are horizontal slices, arranged in a stack.  The blue and red
      lines are preimages of the north pole of $S^i$ and $S^j$, respectively; the
      red ones wrap all the way around, outside this chosen subdomain.}
    \label{fig:cake}
  \end{figure}
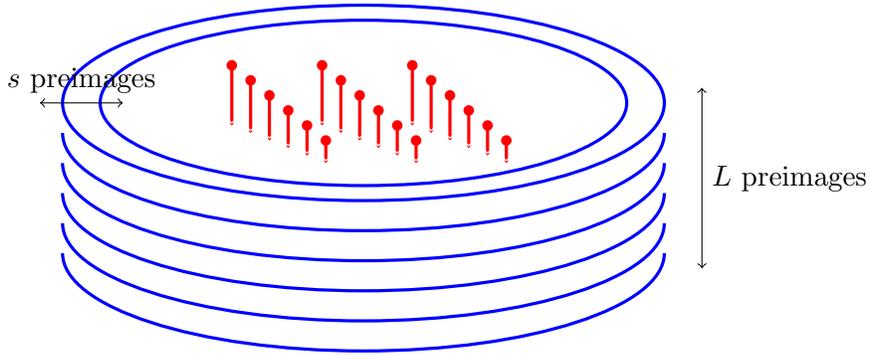
  Let $s=\lceil a/L^{i-1} \rceil$ and $t=\lceil b/L^{j-1} \rceil$ be the depths to
  which we filled the two cubes to build $w_a$ and $w_b$; then each pancake
  contains about $s$ strands of the preimage of $S^i$.

  We now give the construction of the previously mentioned bordism; we do this
  the same way within each pancake, as shown in Figure \ref{fig:htpy}.
  \begin{figure}
    \centering
    \begin{subfigure}[b]{0.47\textwidth}
      \begin{tikzpicture}
        \draw[very thick,color=blue,rounded corners=10pt]
          (0,4) -- (5,4) -- (7,0) -- (2,0) -- cycle;
        \draw[very thick,color=blue,rounded corners=10pt]
          (0.7,3.6) -- (4.7,3.6) -- (6.3,0.4) -- (2.3,0.4) -- cycle;
        \foreach \x in {0,1,2}
          \foreach \y in {0,1,...,5}
            \draw[fast cap-fast cap,very thick,color=red]
              (3+1.2*\x-0.25*\y,0.5+0.5*\y) -- (3+1.2*\x-0.25*\y,1+0.5*\y);
        \draw[<->] (1,1.33) -- (2.17,1.33) node[pos=0.4,anchor=south] {$s$};
        \draw[<->] (1.67,2.67) -- (4.67,2.67) node[pos=0.3,anchor=south] {$t$};
        \draw[<->] (3.4,3.33) -- (4.73,0.67) node[pos=0.5,anchor=west] {$L$};
      \end{tikzpicture}
      \caption{The initial preimages of the north poles of $S^i$ (blue) and $S^j$
        (red) inside a single pancake.}
    \end{subfigure}
    \quad
    \begin{subfigure}[b]{0.47\textwidth}
      \begin{tikzpicture}
        \draw[very thick,color=blue,rounded corners=10pt]
          (0,4) -- (5,4) -- (7,0) -- (2,0) -- cycle;
        \draw[very thick,color=blue,rounded corners=10pt]
          (0.7,3.6)
          {[rounded corners=2pt] -- (2.1,3.6) -- (2.3,3.4) -- (2.3,3.6) -- (3.3,3.6) -- (3.5,3.4) -- (3.5,3.6)}
          -- (4.7,3.6) -- (6.3,0.4)
          {[rounded corners=2pt] -- (5.1,0.4) -- (4.9,0.6) -- (4.9,0.4) -- (3.9,0.4) -- (3.7,0.6) -- (3.7,0.4)}
          -- (2.3,0.4) -- cycle;
        \foreach \x in {0,1,2}
          \foreach \y in {0,1,...,5}
            \draw[fast cap-fast cap,very thick,color=red]
              (3+1.2*\x-0.25*\y,0.5+0.5*\y) -- (3+1.2*\x-0.25*\y,1+0.5*\y);
        \draw[->,line width=2pt] (2.4,3.2) -- (2.7,2.6);
        \draw[->,line width=2pt] (3.6,3.2) -- (3.9,2.6);
        \draw[->,line width=2pt] (4.8,0.8) -- (4.5,1.4);
        \draw[->,line width=2pt] (3.6,0.8) -- (3.3,1.4);
      \end{tikzpicture}
      \caption{The first step is to push the preimages of $S^i$ inwards\ldots}
    \end{subfigure}
    \begin{subfigure}[b]{0.47\textwidth}
      \begin{tikzpicture}
        \foreach \x in {0,2,4} {
          \draw[very thick,color=blue,rounded corners=10pt]
            (\x,4) -- (\x+1.6,4) -- (\x+3.6,0) -- (\x+2,0) -- cycle;
          \draw[very thick,color=blue,rounded corners=10pt]
            (\x+0.6,3.6) -- (\x+1.4,3.6) -- (\x+3,0.4) -- (\x+2.2,0.4) -- cycle;
          \foreach \y in {0,1,...,5}
            \draw[fast cap-fast cap,very thick,color=red]
              (2.4+\x-0.25*\y,0.5+0.5*\y) -- (2.4+\x-0.25*\y,1+0.5*\y);
          \draw[->,line width=2pt] (\x,0) +(2.2,2) .. controls +(0.5,0.8) and +(-0.3,0.3) .. +(3,2.3);
        }
        \draw[<->] (0.2,4.3) -- (5.8,4.3) node[pos=0.5,anchor=south] {$(2s+1)t=O(L)$};
      \end{tikzpicture}
      \caption{\ldots separating the preimages of $S^j$ into layers, each
        encircled by preimages of $S^i$.  Then we take the skinny rings one by
        one\ldots}
    \end{subfigure}
    \quad
    \begin{subfigure}[b]{0.47\textwidth}
      \begin{tikzpicture}
        \foreach \x in {0,2,4} {
          \draw[very thick,color=blue,rounded corners=8pt]
            (\x,4) -- (\x+0.6,4) -- (\x+2.6,0) -- (\x+2,0) -- cycle;
          \draw[very thick,color=blue,rounded corners=6pt]
            (\x+1.3,3.6) -- (\x+1.7,3.6) -- (\x+3.3,0.4) -- (\x+2.9,0.4) -- cycle;
          \foreach \y in {0,1,...,5} {
            \draw[fast cap-fast cap,very thick,color=red]
              (1.9+\x-0.25*\y,0.5+0.5*\y) -- (1.9+\x-0.25*\y,1+0.5*\y);
            \draw[very thick,color=red] (\x+2.79-0.25*\y,0.7+0.5*\y) arc(-54:286:0.2 and 0.18);
          }
        }
      \end{tikzpicture}
      \caption{\ldots and bud them off, creating a new layer for each.}
    \end{subfigure}
    \begin{subfigure}[b]{0.47\textwidth}
      \begin{tikzpicture}
        \foreach \x in {0,2,4} {
          \draw[very thick,color=blue,rounded corners=6pt]
            (\x+0.5,3.6) -- (\x+0.9,3.6) -- (\x+2.5,0.4) -- (\x+2.1,0.4) -- cycle;
          \draw[very thick,color=blue,rounded corners=6pt]
            (\x+1.3,3.6) -- (\x+1.7,3.6) -- (\x+3.3,0.4) -- (\x+2.9,0.4) -- cycle;
          \foreach \y in {0,1,...,5} {
            \draw[fast cap-fast cap,very thick,color=red]
              (1.4+\x-0.25*\y,0.5+0.5*\y) -- (1.4+\x-0.25*\y,1+0.5*\y);
            \draw[very thick,color=red] (\x+1.99-0.25*\y,0.7+0.5*\y) arc(-54:286:0.2 and 0.18);
            \draw[very thick,color=red] (\x+2.79-0.25*\y,0.7+0.5*\y) arc(-54:286:0.2 and 0.18);
          }
        }
      \end{tikzpicture}
      \caption{Finally, we separate the $L^{j-1}$-part links into Hopf links, and
        simultaneously\ldots}
    \end{subfigure}
    \quad
    \begin{subfigure}[b]{0.47\textwidth}
      \begin{tikzpicture}
        \foreach \x in {-1,0,...,4} {
          \foreach \y in {0,1,...,5} {
            \draw[very thick,color=blue] (\x+2.94-0.25*\y,0.66+0.5*\y) arc(100:440:0.2 and 0.1);
            \draw[very thick,color=red] (\x+2.9-0.25*\y,0.5+0.5*\y) arc(-50:290:0.2 and 0.18);
          }
        }
      \end{tikzpicture}
      \caption{\ldots kill the big leftover preimages of $S^j$ which are no
        longer linked with anything.  In the end, we have a grid of Hopf links,
        as desired.}
    \end{subfigure}
    \caption{The stages of the homotopy $g$, as seen on the neighborhood of a
      $j$-dimensional cross-section.  The preimage of the north poles of $S^i$
      and $S^j$ are shown in blue and red, respectively, and the homotopy
      corresponds to a simultaneous bordism of these two manifolds.  The vertical
      direction stands in for a $(j-1)$-dimensional subspace.} \label{fig:htpy}
  \end{figure}
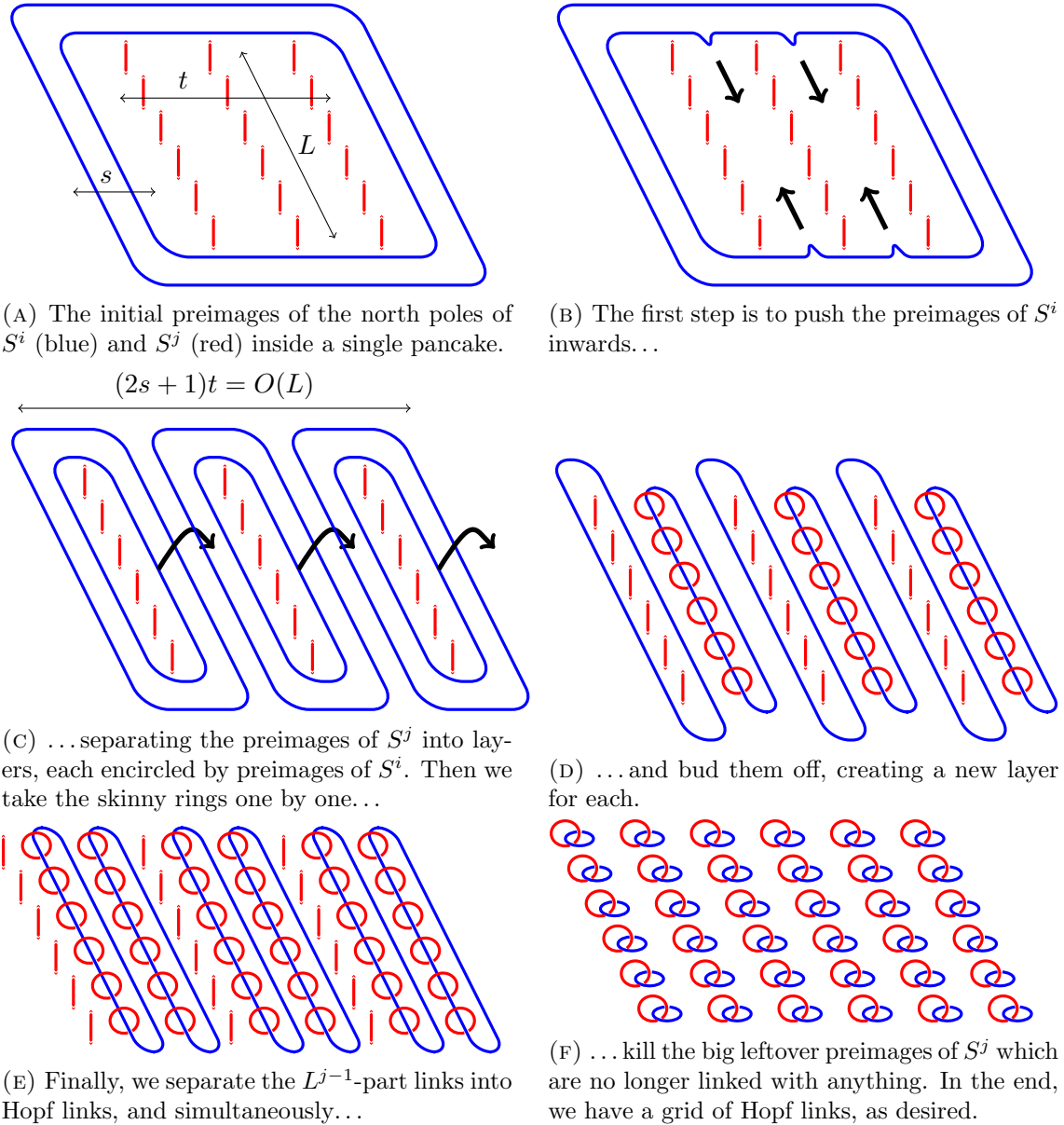
\end{proof}

\section{Symmetric spaces}

As shown in \cite{IRMC}, the growth of $[X,Y]$ still need not be polynomial even
when $Y$ is the smoothly formal space $S^4$.  However, in this case the
assumption (S2) given in the introduction can be formalized, and holds, or nearly
so.  We will show this here for symmetric spaces.  The argument easily extends to
wedges of symmetric spaces, but to give the correct statement there would require
some extra definitions which would take us too far afield.

In this section we take the naive, skeleton-by-skeleton approach to constructing
efficient maps.  For example, given a simplicial complex $X$ and an odd
$n \geq 3$, maps $X \to S^n$ are determined up to a finite indeterminacy by an
obstruction class in $H^n(X)$.  Given a simplicial cochain $w \in C^n(X)$
representing this homology class, we can construct a map as follows:
\begin{itemize}
\item Send the $(n-1)$-skeleton to the basepoint of the sphere.
\item Send each $n$-simplex to an optimally efficient map whose degree is
  dictated by the given cochain.
\item Extend to higher skeleta.  For each $k$-simplex, we have a finite number of
  choices up to homotopy in non-canonical bijection with $\pi_k(S^n)$.
\end{itemize}
The Lipschitz constant of the map on $X^{(n)}$ generated by the first two steps
is $C(n)(\lVert w \rVert_\infty)^{1/n}$.  However, to do the later steps in a
quantitative way, one also needs bounds on the size of the smallest map filling
in a given map on the boundary of a $k$-simplex; in other words, estimates on the
size of a nullhomotopy.  Luckily, Theorem B of \cite{CDMW} shows that one can
construct linear nullhomotopies in this situation, and therefore the Lipschitz
constant of the resulting map is linear in $(\lVert w \rVert_\infty)^{1/n}$.
Moreover, by Lemma \ref{lem:cel}, it is sharp up to a constant: every
$L$-Lipschitz map has a first obstruction $w$ with
$\lVert w \rVert_\infty \leq CL^n$.

Things get more complicated if we look at maps $X \to S^n$ for $n$ even.  Here
there is a second rational obstruction in dimension $2n-1$; more importantly, we
no longer have a linear bound on the sizes of nullhomotopies, unless we assume
Conjecture \ref{conj:sym}.  Nevertheless, we can follow the same procedure as
above up to the $(2n-2)$-skeleton, and then perform the following steps:
\begin{itemize}
\item Fill in the $(2n-1)$-simplices in the most efficient possible way.
\item Now there is an obstruction in $H^{2n-1}(X)$ to extending to our desired
  homotopy class.  Given a cochain $z$ representing this obstruction, we glue in
  maps $S^{2n-1} \to S^n$ on each simplex as dictated by $z$.  These maps can have
  Lipschitz constant $\leq C(n)(\lVert z \rVert_\infty)^{1/2n}$, as per
  \cite{GrHED}.
\item Finally we extend to higher skeleta as before.  However, this time our
  extensions are only guaranteed by Theorem \ref{thm:sym} to be
  $O(L^{1+\epsi})$-Lipschitz for any given $\epsi>0$, where $L$ is the Lipschitz
  constant of the map on the previous skeleton.
\end{itemize}
Thus as we go up, we accumulate nonlinearity.  However, in the end, we can still
ensure that the Lipschitz constant is at most
$$C(n,\dim X,\epsi)\bigl(\lVert w \rVert_\infty^{1/n}+\lVert z \rVert_{\infty}^{1/2n}
\bigr)^{1+\epsi}$$
for any $\epsi>0$ we choose.  The converse is a bit harder to state, since $z$ is
a relative obstruction.  Given an $L$-Lipschitz map, we again have
$\lVert w \rVert_\infty \leq C(n)L^n$; moreover, given two maps which coincide on
$X^{(2n-2)}$, the obstruction $z^\prime$ to homotoping them in dimension $2n-1$,
which is well-defined, satisfies $\lVert z^\prime \rVert_\infty \leq C(n)L^{2n}$.
Thus the total number of homotopy classes of $L$-Lipschitz maps is
$$O(L^{n\rk H^n(X)+2n\rk H^{2n-1}(X)}),$$
although it could be much smaller since different obstructions may specify the
same homotopy class.  The main theorem of this section uses the results of
\cite{PCDF} to generalize these observations to spaces with the rational homotopy
type of symmetric spaces.

Now, more generally, let $Y$ be a finite CW complex whose 1-skeleton is a point.
Then a homotopy class of maps $X \to Y$ is specified (in an obviously non-unique
way) by a sequence of obstructions in $C^k(X;\pi_k(Y))$, $k=2,\ldots,\dim X$.
More precisely, the $k$th such obstruction sends each $k$-simplex $p$ to an
element of the torsor for $\pi_k(Y)$ consisting of nullhomotopies of the map on
$\partial p$.  Thus it is only well-defined once the map is fixed on lower
skeleta.  Nevertheless, as in the case of even-dimensional spheres, we can still
compare the sizes of obstructions realizable with Lipschitz constant $L$.

\begin{thm} \label{thm:s4}
  Let $X$ and $Y$ be finite complexes such that $Y$ has the rational homotopy
  type of a symmetric space.  Then there is a way to make sense of the ``size''
  of a sequence of obstructions so that, assuming Conjecture \ref{conj:sym}, the
  minimal Lipschitz constant of a representative of $\alpha \in [X,Y]$ is equal
  up to a multiplicative constant to the minimal size of a sequence of
  obstructions describing it.  Without that assumption, they are related in at
  worst a slightly superlinear way.

  Specifically:
  \begin{itemize}
  \item Denote the rational Hurewicz map by $h_k:\pi_k(Y) \otimes \mathbb{Q} \to
    H_k(Y;\mathbb{Q})$.
  \item For every $k$, fix a splitting
    $$\pi_k(Y) \otimes \mathbb{Q}=\ker h_k \oplus N_k$$
    with projections $p_1,p_2$ sending elements to $\ker h_k$ and $N_k$
    respectively, and a norm $\lvert\cdot\rvert$ on
    $\pi_k(Y) \otimes \mathbb{Q})$.
  \end{itemize}
  Then there are a constant $c>0$ and a function
  $g:\mathbb{R}^+ \to \mathbb{R}^+$ depending on these norms, and a choice of
  zero for every torsor described above, such that:
  \begin{enumerate}[(i)]
  \item Every $cL$-Lipschitz map lies in a homotopy class which can be described
    by obstructions $\beta_k \in C^k(X;\pi_k(Y))$ such that
    $$\sup_{k\text{-cells $\sigma$ of }X} |p_1(\beta_k(\sigma))| \leq L^{k+1}
    \quad\text{and}\quad
    \sup_{k\text{-cells $\sigma$ of }X} |p_2(\beta_k(\sigma))| \leq L^k.$$
  \item Conversely, every homotopy class which can be described by such
    obstructions (even just rationally) has a $g(L)$-Lipschitz representative.
  \end{enumerate}
  Assuming Conjecture \ref{conj:sym}, this $g$ is linear.  Otherwise, we merely
  know that $g(L)=O(L^{1+\epsi})$ for every $\epsi>0$.
\end{thm}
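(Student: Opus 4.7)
The plan is to induct on the skeleta of $X$, closely paralleling the $S^n$ constructions just sketched but invoking the refined machinery for symmetric spaces developed in \cite{PCDF}. The key ingredients will be an efficient realization of primary (Hurewicz-detected) obstructions in the style of \cite{GrHED}; an efficient realization of secondary (Hurewicz-trivial) obstructions, which for symmetric spaces are generated by iterated Whitehead-type brackets and admit the optimal Gromov-style distortion estimate $|p_1|^{1/(k+1)}$; and Theorem \ref{thm:sym} to glue skeleta together via nullhomotopies. A zero for each torsor of fillings, compatible with the splitting $\ker h_k \oplus N_k$, is fixed once and for all using the formality of $Y$ so that directions (i) and (ii) refer to the same obstruction cochains.

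For direction (i), after applying Lemma \ref{lem:cel} to reduce to a cellular $L$-Lipschitz map $f:X \to Y$, I would bound the obstructions cell by cell and inductively up the skeleta of $X$. On a $k$-cell $\sigma$, the primary part $p_2(\beta_k(\sigma))$ is identified with its Hurewicz image in $H_k(Y;\mathbb{Q})$, and the differential-form estimate of Section \ref{S:cell} then yields $|p_2(\beta_k(\sigma))| \leq CL^k$. For the Hurewicz-trivial part, the bound $|p_1(\beta_k(\sigma))| \leq CL^{k+1}$ should come from pairing with a secondary invariant: in the smoothly formal setting such classes are detected by integrals of the form $\int_c d^{-1}\omega$, where $\omega$ is a closed $k$-form on $Y$ and $c$ is a $(k+1)$-chain arising from a cellular trivialization on $X^{(k+1)}$. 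This is the direct generalization of the $S^{2n}$ Hopf-invariant estimate sketched above.

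For direction (ii), I would construct $f_k:X^{(k)} \to Y$ from $f_{k-1}$ on each $k$-cell $\sigma$ in two layers: first extend by the chosen ``zero'' nullhomotopy of $f_{k-1}|_{\partial\sigma}$, invoked via Theorem \ref{thm:sym} at a multiplicative Lipschitz cost of $L^{o(1)}$; then superimpose the adjustment realizing $\beta_k(\sigma)$. The $p_2$-adjustment is realized by an efficient degree map $S^k \to Y$ of Lipschitz constant $\leq C|p_2|^{1/k} \leq CL$ in the Gromov style, and the $p_1$-adjustment by the optimal distortion realization of iterated brackets provided by \cite{PCDF}, of Lipschitz constant $\leq C|p_1|^{1/(k+1)} \leq CL$. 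The principal difficulty, and the sole source of the slightly superlinear $g(L)$, is controlling the accumulation of Lipschitz constants across the $\dim X$ skeleta: each nullhomotopy step multiplies by Theorem \ref{thm:sym}'s $L^{o(1)}$ factor, so iterating a bounded number of times keeps $g(L)=O(L^{1+\epsi})$ for every $\epsi>0$, while under Conjecture \ref{conj:sym} each such multiplier is $O(1)$ and $g$ becomes linear.
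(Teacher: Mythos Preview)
Your overall strategy---inducting on skeleta, using Theorem \ref{thm:sym} (or Conjecture \ref{conj:sym}) for the nullhomotopy step, and tracking how the Lipschitz factors accumulate---matches the paper's proof, and your treatment of direction (ii) is essentially identical to it.

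The substantive difference lies in how you obtain the size bounds on $p_1$ and $p_2$.  The paper does not separate these at all: it invokes a single corollary of \cite[Thm.~5.3]{PCDF} (recorded there as a standalone statement) asserting that an element of $\pi_k(Y)$ has an $L$-Lipschitz representative if and only if $|p_1|\lesssim L^{k+1}$ and $|p_2|\lesssim L^k$.  For (i), the paper then simply observes that on each $k$-cell the difference between two cellular $L$-Lipschitz maps agreeing on $X^{(k-1)}$ is an $O(L)$-Lipschitz element of $\pi_k(Y)$, to which that result applies directly; for (ii) it uses the converse direction of the same result to realize each $\beta_k(\sigma)$ efficiently in one step, rather than handling the $N_k$ and $\ker h_k$ parts separately.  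Your route---differential forms for $p_2$, secondary integrals $\int_c d^{-1}\omega$ for $p_1$---is morally the content of that \cite{PCDF} result, but the $p_1$ half (``should come from pairing with a secondary invariant'') is exactly where the hard work lives and is not made self-contained in your sketch; in practice you would end up reproving the relevant portion of \cite{PCDF}.  The paper's packaging is cleaner because it cites the distortion theorem once and uses both implications, at the cost of being less explicit about the underlying mechanism.  One further small point: your ``zero fixed once and for all using formality'' needs to be compatible with Lipschitz bounds for the comparison in (i) to go through; the paper's formulation (comparing two $L$-Lipschitz maps directly) sidesteps having to say what the zero is.
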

\begin{proof}
  The proof uses the following corollary of \cite[Thm.~5.3]{PCDF}:
  \begin{thm} \label{thm:dist}
    Define $S_{L,c} \subset \pi_n(Y)$ to be
    $$\{\alpha \in \pi_n(Y): \lvert p_1(\alpha) \rvert \leq cL^{k+1}\text{ and }
    \lvert p_2(\alpha) \rvert \leq cL^k\}.$$
    Then there are constants $C>c>0$ such that the set of elements of $\pi_n(Y)$
    which have $L$-Lipschitz representatives lies between $S_{L,c}$ and $S_{L,C}$.
  \end{thm}

  To show part (i), we take two $L$-Lipschitz maps (which we can assume to be
  cellular) and let $k$ be the first dimension in which they differ.  Then the
  obstruction to homotoping one to the other is an element of $C^k(X;\pi_k(Y))$
  where every cell $\sigma$ is sent to a $C_\sigma L$-Lipschitz map $S^k \to Y$.
  Then Theorem \ref{thm:dist} gives the right kind of bound on the size of this
  obstruction.

  Now we show part (ii).  Given an obstruction-theoretic description of a
  homotopy class $\alpha \in [X,Y]$, we build a map $f:X \to Y$ by skeleta.
  First send $X^{(1)}$ to the basepoint.  Now suppose we have constructed
  $f_{k-1}:X^{(k-1)} \to Y$ with Lipschitz constant $M$.  We extend to $X^{(k)}$ by
  first using the bound of Theorem \ref{thm:sym} (or that of Conjecture
  \ref{conj:sym}) to fill in the $k$-simplices as efficiently as possible, then
  in each $k$-simplex $p$ glue in an efficient representative of the element
  $\alpha_p \in \pi_k(X)$ corresponding to the resulting obstruction.  The
  Lipschitz constant of the resulting map is
  $$C_k\max\{\gamma_k(M),\sup \{\Lip\alpha_p: k\text{-simplices $p$ of }X\}\},$$
  where $\gamma_k$ is our bound on sizes of homotopies.

  If all the $\gamma_k$ are linear, we get a linear bound.  Otherwise, we
  accumulate a Lipschitz constant
  $$\gamma_{\dim X}(\cdots(\gamma_3(\gamma_2(L)))\cdots),$$
  but this is still $O(L^{1+\epsi})$ for any $\epsi>0$.

  Now, maps in the same rational homotopy class may not be specified by the same
  rational obstructions.  However, they have the same optimal Lipschitz constant
  by Theorem 5.1 of \cite{PCDF}.
\end{proof}

\subsection*{Acknowledgments}
The author was partially supported by NSF RTG grant DMS-1547357.  He would like
to thank the anonymous referee for helpful suggestions regarding the exposition.

\bibliographystyle{amsalpha}
\bibliography{liphom}
\end{document}